\begin{document}


\newtheorem{theorem}{Theorem}[section]
\newtheorem{definition}[theorem]{Definition}
\newtheorem{lemma}[theorem]{Lemma}
\newtheorem{proposition}[theorem]{Proposition}
\newtheorem{corollary}[theorem]{Corollary}
\newtheorem{example}[theorem]{Example}
\newtheorem{remark}[theorem]{Remark}
\newtheorem{acknowledgement}[theorem]{Acknowledgement}

\newcommand{\cAtheta}{\mathcal{A}_\theta}
\newcommand{\cAab}{\mathcal{A}_{\alpha\beta}}

\newcommand{\cA}{\mathcal{A}}

\newcommand{\mcA}{\mathcal{A}}

\newcommand{\Alafgra}{{_\mathcal{A}}\langle f,g\rangle}
\newcommand{\lafgraB}{\langle f,g\rangle_\mathcal{B}}

\newcommand{\AiLa}{\mathcal{A}^1(\Lambda,c)}
\newcommand{\AisLa}{\mathcal{A}^1_s(\Lambda,c)}
\newcommand{\Aiab}{\mathcal{A}^1(\alpha\ZZ\times\beta\ZZ,c)}
\newcommand{\Aiba}{\mathcal{A}^\infty(\beta^{-1}\ZZ\times\alpha^{-1},c)}
\newcommand{\AiLacirc}{\mathcal{A}^1(\Lambda^\circ,\overline{c})}
\newcommand{\AisLacirc}{\mathcal{A}^1_s(\Lambda^\circ,\overline{c})}
\newcommand{\AinfLa}{\mathcal{A}^\infty(\Lambda,c)}
\newcommand{\AinfsLa}{\mathcal{A}^\infty_s(\Lambda,c)}
\newcommand{\Ainfab}{\mathcal{A}^\infty(\alpha\ZZ\times\beta\ZZ,c)}
\newcommand{\AinfLacirc}{\mathcal{A}^\infty(\Lambda^\circ,\overline{c})}
\newcommand{\AinfsLacirc}{\mathcal{A}^\infty_s(\Lambda^\circ,\overline{c})}
\newcommand{\Ainfba}{\mathcal{A}^\infty(\beta^{-1}\ZZ\times\alpha^{-1},c)}
\newcommand{\Ata}{\mathcal{A}_\theta}
\newcommand{\aZZ}{\alpha\mathbb{Z}}
\newcommand{\bZZ}{\beta\mathbb{Z}}

\newcommand{\akl}{a_{kl}}
\newcommand{\amn}{a_{mn}}
\newcommand{\bkl}{b_{kl}}
\newcommand{\bmn}{b_{mn}}
\newcommand{\bmnkl}{b_{m-k,n-l}}
\newcommand{\bfa}{{\bf{a}}}

\newcommand{\bfb}{{\bf{b}}}
\newcommand{\cB}{\mathcal{B}}
\newcommand{\mcB}{\mathcal{B}}

\newcommand{\mcC}{\mathcal{C}}

\newcommand{\CmS}{\mathrm{C}^*}
\newcommand{\CstLatw}{\mathrm{C}^*(\Lambda,c)}
\newcommand{\CstLacirctw}{\mathrm{C}^*(\Lambda^\circ,\overline{c})}
\newcommand{\cAVcB}{{_\mathcal{A}}{V}_\mathcal{B}}

\newcommand{\bff}{\mathbf{f}}

\newcommand{\bg}{\mathbf{g}}
\newcommand{\gtl}{g(t;l)}
\newcommand{\pq}{\tfrac{p}{q}}
\newcommand{\trq}{\tfrac{r}{q}}

\newcommand{\GabFrame}{A\|f\|_2^2\le\sum_{\lambda\in\Lambda}|\langle f,\pi(\lambda)g\rangle|^2\le B\|f\|_2^2}
\newcommand{\GgLa}{\mathcal{G}(g,\Lambda)}
\newcommand{\Ggab}{\mathcal{G}(g,\alpha\ZZ\times\beta\ZZ)}
\newcommand{\Ggba}{\mathcal{G}(g,\beta^{-1}\ZZ\times\alpha^{-1}\ZZ)}
\newcommand{\Ghab}{\mathcal{G}(h,\alpha\ZZ\times\beta\ZZ)}
\newcommand{\Ghba}{\mathcal{G}(h,\beta^{-1}\ZZ\times\alpha^{-1}\ZZ)}
\newcommand{\Ggrq}{\mathcal{G}_{r,q}(g)}

\newcommand{\cH}{\mathcal{H}}
\newcommand{\cHq}{\mathcal{H}_q}
\newcommand{\Cs}{C^*}
\newcommand{\alan}{{_\mathcal{A}\langle}}
\newcommand{\lan}{\langle}
\newcommand{\ranb}{\rangle_\mathcal{B}}
\newcommand{\ran}{\rangle}

\newcommand{\la}{\lambda}
\newcommand{\lac}{\lambda^\circ}
\newcommand{\La}{\Lambda}
\newcommand{\Lac}{\Lambda^\circ}

\newcommand{\livsp}{\ell^1_v}
\newcommand{\LtR}{L^2(\mathbb{R})}
\newcommand{\LinfR}{L^\infty(\mathbb{R})}

\newcommand{\Mbq}{M^\beta_q}
\newcommand{\Tarq}{T^\alpha_{r,q}}
\newcommand{\pirq}{\pi_{r,q}}
\newcommand{\pizo}{\pi_{0,1}}

\newcommand{\piab}{\pi(\alpha k,\beta l)}
\newcommand{\piabg}{\pi(\alpha k,\beta l)g}

\newcommand{\RRt}{\newcommand{R}^2}
\newcommand{\RRd}{\mathbf{R}^d}
\newcommand{\Sgab}{S_{g}^{\alpha,\beta}}
\newcommand{\cS}{\mathscr{S}}
\newcommand{\cSR}{\mathscr{S}(\RR)}

\newcommand{\tr}{\mathrm{tr}}

\newcommand{\aV}{{_\mathcal{A}}V}
\newcommand{\Vb}{V_{\mathcal{B}}}
\newcommand{\aVb}{{_\mathcal{A}}V_{\mathcal{B}}}
\newcommand{\Vzo}{V_{0,1}}
\newcommand{\Vpq}{V_{p,q}}
\newcommand{\WLinfli}{W(L^\infty,\ell^1)}
\newcommand\WLinfliv{{ W(L^\infty,\ell^1_v) }}   
\newcommand{\R}{\mathbf{R}}

\newcommand{\CC}{\mathbb{C}}
\newcommand{\CCq}{\mathbb{C}^q}
\newcommand{\RR}{\mathbb{R}}
\newcommand{\Rt}{\mathbb{R}^2}
\newcommand{\RRtd}{\mathbb{R}^{2d}}
\newcommand{\TT}{\mathbb{T}}
\newcommand{\ZZ}{\mathbb{Z}} 
\newcommand{\ZZt}{\mathbb{Z}^2}
\newcommand{\aZbZ}{\alpha\mathbb{Z}\times\beta\mathbb{Z}}
\newcommand{\bZaZ}{\beta^{-1}\ZZ\times\alpha^{-1}\ZZ}

\newcommand{\pa}{\partial}
\newcommand{\opa}{\overline{\partial}}
\newcommand{\onabla}{\overline{\nabla}}
\newcommand{\cE}{\mathcal{E}}
\newcommand{\mi}{\mathrm{i}}
\newcommand{\thZZ}{\theta\mathbb{Z}\times\mathbb{Z}}
\newcommand{\ZthZ}{\mathbb{Z}\times\theta^{-1}\mathbb{Z}}
\newcommand{\pithkl}{\pi(\theta k,l)}
\newcommand{\pikthl}{\pi(k,\theta^{-1}l)}
\newcommand{\SR}{\mathscr{S}(\R)}

\newcommand{\beq}{\begin{equation}}
\newcommand{\eeq}{\end{equation}}
\newcommand{\nn}{\nonumber}
\def\hs#1#2{\left\langle #1,#2\right\rangle}  %
\def\lhs#1#2{{_\bullet\!\!}\left\langle #1,#2\right\rangle}
\def\rhs#1#2{\left\langle #1,#2\right\rangle\!\!{_\bullet}}
\pagestyle{plain}
\title{The Balian-Low theorem and noncommutative tori}
\author{Franz Luef} 
\address{Department of Mathematical Sciences\\ NTNU Trondheim\\7041 Trondheim\\Norway}
\email{franz.luef@math.ntnu.no}

\keywords{Gabor frames, noncommutative tori, gauge connections }
\subjclass{Primary 42C15, 58B34; Secondary 46L08, 22D25}
\begin{abstract}
We point out a link between the theorem of Balian and Low on the non-existence of well-localized Gabor-Riesz bases and a constant curvature connection on projective modules over noncommutative tori. 
\end{abstract}
\maketitle \pagestyle{myheadings} \markboth{F. Luef}{The Balian-Low theorem and noncommutative tori}
\thispagestyle{empty}
\section{Introduction}

The theorem of Balian-Low on the non-existence of well-localized Gabor-Riesz bases for $\LtR$ is one of the cornerstones of time-frequency analysis \cite{ba81-1,lo85}. 
For the formulation we first introduce the multiplication operator and differentiation operator, denoted by $(\nabla_1 g)(t)=2\pi \mi\, t\,g(t)$ and $(\nabla_2 g)(t)=g^\prime(t)$ , respectively. Let $\pi(z)g(t)=e^{2\pi i\omega t}g(t-x)$ be the time-frequency shift of a function $g$ by $z=(x,\omega)$ in phase space. Gabor 
studied in \cite{ga46} systems of the form $\mathcal{G}(g,\thZZ)=\{\pithkl g:\,k,l\in\ZZ\}$, so-called {\it Gabor systems} with {\it Gabor atom} $g$. The density theorem for Gabor frames says that if $\mathcal{G}(g,\thZZ)$ $\theta$ is a frame, then $\theta\in(0,1]$. 
\vskip.3cm
\noindent
A natural question about Gabor systems $\mathcal{G}(g,\theta\ZZ\times\ZZ)$ for a fixed Gabor atom $g$ is to study for which $\theta$ the system $\mathcal{G}(g,\theta\ZZ\times\ZZ)$ is a frame. If $g$ is well-localized in time and frequency, then $\theta=1$ will be excluded as for the Gaussian. It turns out that for some Gabor atoms $g$, such as the Gaussian or any totally positive function of finite type $\mathcal{G}(g,\theta\ZZ\times\ZZ)$ is a Gabor frame for any $\theta$ in $(0,1)$, \cite{grst13,ly92,se92-1}. On the other hand the answer for the indicator function of an interval $[0,c]$ is much more intricate and the values $\theta$ and $c$ for which one gets a Gabor frame are known as Janssen's tie \cite{dasu13,ja03}. The theorem of Balian-Low provides an explanation for these facts.
\begin{theorem}[Balian-Low]\label{balianlow}
Suppose the Gabor system $\mathcal{G}(g,\ZZt)$ is an orthonormal basis for $\LtR$. Then 
\begin{equation*}
  \Big(\int_{\RR}|(\nabla _1 g)(t)|^2\,dt\Big)\Big(\int_{\RR}|(\nabla _2 g)(t)|^2\,dt\Big)=\infty. 
\end{equation*}
\end{theorem}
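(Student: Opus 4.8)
The plan is to argue by contradiction. Suppose $\mathcal{G}(g,\ZZt)$ is an orthonormal basis for $\LtR$ and, contrary to the claimed conclusion, that both integrals are finite. Writing $P$ for multiplication by $t$ and $D=\tfrac{1}{2\pi\mi}\tfrac{d}{dt}$, finiteness of the two integrals says exactly $Pg\in\LtR$ and $Dg\in\LtR$ (as $\nabla_1 g=2\pi\mi\,Pg$ and $\nabla_2 g=2\pi\mi\,Dg$), while the orthonormal basis hypothesis forces $\|g\|_2=1$. The strategy is to show that the number $\langle Pg,Dg\rangle$ is simultaneously real (from the orthonormal expansion) and not real (from an integration by parts), which is absurd.

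\emph{Integration by parts.} On the relevant domain $P$ and $D$ are symmetric, so $\langle Dg,Pg\rangle-\langle Pg,Dg\rangle=\tfrac{1}{2\pi\mi}\int_{\RR}t\,(g'\,\overline{g}+g\,\overline{g'})\,dt=\tfrac{1}{2\pi\mi}\int_{\RR}t\,(|g|^2)'\,dt$. The function $t\mapsto t|g(t)|^2$ is locally absolutely continuous with derivative $|g|^2+g'\,\overline{tg}+(tg)\,\overline{g'}\in L^1(\RR)$ — each summand is a product of two of the square-integrable functions $g,g',tg$ — hence it has finite limits at $\pm\infty$, and those limits must be $0$ since $|g|^2\in L^1(\RR)$. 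Integrating by parts therefore gives $\int_{\RR}t\,(|g|^2)'\,dt=-\|g\|_2^2=-1$, so $\langle Dg,Pg\rangle-\langle Pg,Dg\rangle=-\tfrac{1}{2\pi\mi}\neq 0$; equivalently $\langle Pg,Dg\rangle\notin\RR$.

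\emph{Orthonormal expansion.} By Parseval in the orthonormal basis $\{\pi(k,l)g\}_{k,l\in\ZZ}$ we have $\langle Pg,Dg\rangle=\sum_{k,l\in\ZZ}c_{kl}\,\overline{d_{kl}}$ with $c_{kl}=\langle Pg,\pi(k,l)g\rangle$ and $d_{kl}=\langle Dg,\pi(k,l)g\rangle$, both in $\ell^2(\ZZt)$. The intertwining identities $P\,\pi(k,l)=\pi(k,l)(P+k)$ and $D\,\pi(k,l)=\pi(k,l)(D+l)$, together with $\pi(k,l)^*=\pi(-k,-l)$ and orthonormality (the correction terms $k$, resp. $l$, are supported at $(k,l)=(0,0)$ and vanish there), give $c_{kl}=\overline{c_{-k,-l}}$ and $d_{kl}=\overline{d_{-k,-l}}$. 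Re-indexing the absolutely convergent series by $(k,l)\mapsto(-k,-l)$ yields $\sum_{k,l}c_{kl}\overline{d_{kl}}=\sum_{k,l}\overline{c_{kl}}\,d_{kl}=\overline{\sum_{k,l}c_{kl}\overline{d_{kl}}}$, so $\langle Pg,Dg\rangle\in\RR$. This contradicts the previous paragraph and proves the theorem.

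The one genuinely delicate point is the vanishing of the boundary term $t|g(t)|^2$ at infinity, where one must use $tg\in\LtR$ and not merely $g\in\LtR$; everything else is bookkeeping with the Heisenberg commutation relation $PD-DP=\tfrac{1}{2\pi\mi}I$ on the appropriate domain. From the perspective of the rest of the paper, that commutator is the curvature of the canonical constant-curvature connection on the Heisenberg module, and the nonzero scalar $-\tfrac{1}{2\pi\mi}$ appearing above is exactly this constant curvature evaluated against the trace; the Balian--Low obstruction is then the impossibility of this curvature vanishing.
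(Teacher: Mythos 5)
Your proof is correct and is essentially Battle's argument, which is exactly what the paper presents (in the more general Riesz-basis form, with dual atom $h=S^{-1}g$; your case is the specialization $h=g$): the symmetry $c_{kl}=\overline{c_{-k,-l}}$ you derive from the intertwining relations is the paper's identity $\langle\nabla_1 g,\pi(k,l)h\rangle=\langle\pi(-k,-l)g,\nabla_1 h\rangle$ obtained from the Leibniz rule, and the resulting reality of $\langle Pg,Dg\rangle$ contradicts the nonvanishing commutator $[\nabla_1,\nabla_2]=-2\pi\mi\,\mathrm{I}$, i.e.\ the constant curvature. If anything, you are more careful than the paper on the final step, since you justify the integration by parts and the vanishing of the boundary term $t|g(t)|^2$ at infinity, which the paper leaves implicit in its appeal to the curvature identity.
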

\noindent 
In particular, the theorem of Balian-Low implies that if $\mathcal{G}(g,\ZZ^2)$ is an orthonormal basis for $\LtR$, then $g$ is not well-localized in time and frequency, e.g. $g$ cannot be in the Schwartz class $\cSR$ or in Feichtinger's algebra $S_0(\RR)$. The definition of Gabor systems does not indicate a link to regularity properties of the Gabor atom. Hence, the very reason for the incompatibility between orthonormal Gabor bases of the form $\mathcal{G}(g,\ZZt)$ and good time-frequency localization is not well understood despite the vast literature on the Balian-Low theorem \cite{asfeka14,ba81-1,ba88,behewa95,ga08,grhaheku02,grorro15,niol13,niol17}. Note that  some authors refer to statements of the form Theorem \eqref{balianlow} as {\it weak} Balian-Low theorems.  
\vskip.3cm
\noindent
The main aim of this investigation is to present an approach to Gabor frames that provides an explanation of the link between regularity properties of Gabor atoms and their behavior at the critical density. We are building on the correspondence between Gabor frames and projective modules over noncommutative tori \cite{lu09,lu11}. The standard argument to demonstrate Theorem \ref{balianlow} is due to Battle \cite{ba88}. We are demonstrating that Battle's argument is best understood in terms of noncommutative geometry.
\section{Noncommutative tori}

In noncommutative geometry one attempts to define geometric objects and notions for general $C^*$-algebras. For our purpose we need the noncommutative torus $\cA_\theta$ equipped with 
its structure as a noncommutative manifold. We briefly recall the construction of vector bundles over noncommutative tori, which are finitely generated projective 
modules over $\cA_\theta$, the differential structure on $\cA_\theta$ is given by derivations and on the vector bundles by a connection, and the notion of curvature of a connection \cite{co80}.   
 \vskip.3cm
 \noindent
We denote the operators $\pi(0,1)$ and $\pi(\theta,0)$  as $M_1$ and $T_\theta$,
respectively. Note that we have $M_1T_\theta=e^{2\pi i\theta}T_\theta M_1$ and hence the norm closure of $\{\pi(k\theta,l):\,k,l\in\ZZ\}$ 
defines the noncommutative torus $\cA_\theta$, \cite{ri81}. The smooth noncommutative torus is the subalgebra $\cA_\theta^\infty$ of $\cA_\theta$ consisting of operators
\begin{equation}\label{smoothA}
  \pi({\bf a})=\sum_{k,l\in\ZZ}a_{kl}\pithkl, \qquad \textup{for} \quad {\bf a}=(a_{kl})\in\mathscr{S}(\ZZt).
\end{equation}
The standard derivations on $\cA_\theta$ are given by
\begin{align}\label{2dernct}
\pa_1(a)  = 2\pi \mi \, 
\sum_{k,l}k a_{kl}\pithkl \, \nonumber \quad \textup{and} \qquad  
\pa_2(a) =2\pi \mi \, 
\sum_{k,l}l a_{kl}\pithkl \, . 
\end{align}
The Schwartz space $\cSR$ turns out to be vector bundle over $\cA_\theta^\infty$, \cite{co80,lu09,ri88}.
\begin{proposition}[Connes]
The Schwartz space $\cSR$ is a finitely generated projective module over the smooth noncommutative torus $\cA_\theta^\infty$ with respect to the following $\cA_\theta^\infty$-valued inner product and left action:
\begin{align}
	 \lhs{f}{g}    &=\sum_{k,l\in\ZZ}\langle f,\pithkl g\rangle\pithkl  \quad\text{for}\, f,g\in\cS(\RR),\nonumber \\
		{\bf a}\cdot g&=\sum_{k,l\in\ZZ}a_{kl}\pithkl g\quad\text{for}\, a\in\cS(\ZZt),\,g\in\cSR .\nonumber 
\end{align}
\end{proposition}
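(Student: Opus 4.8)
The plan is to verify in turn that (i) the left action and the form $\lhs{\cdot}{\cdot}$ take values in the asserted spaces, (ii) the module and inner‑product axioms hold, and (iii) $\cSR$ is finitely generated and projective, the last step resting on the existence of a Gabor frame with Schwartz windows. For (i): if $\mathbf a=(a_{kl})\in\mathscr{S}(\ZZt)$ and $g\in\cSR$, then every Schwartz seminorm of $\pithkl g$ grows at most polynomially in $(k,l)$ while $(a_{kl})$ is rapidly decreasing, so $\sum_{k,l}a_{kl}\pithkl g$ converges absolutely in $\cSR$; this defines a jointly continuous left action satisfying $(\mathbf a\mathbf b)\cdot g=\mathbf a\cdot(\mathbf b\cdot g)$, the product on $\cA_\theta^\infty$ being twisted convolution of Schwartz sequences. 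Conversely, for $f,g\in\cSR$ one has $\hs{f}{\pithkl g}=V_gf(\theta k,l)$, lattice samples of the short‑time Fourier transform $V_gf\in\mathscr{S}(\RR^2)$, so $(\hs{f}{\pithkl g})_{k,l}\in\mathscr{S}(\ZZt)$ and $\lhs{f}{g}\in\cA_\theta^\infty$ by \eqref{smoothA}.

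For (ii), I would express the relation $M_1T_\theta=e^{2\pi\mi\theta}T_\theta M_1$ as the cocycle identity $\pi(\la)\pi(\mu)=c(\la,\mu)\,\pi(\la+\mu)$ on $\thZZ$; reindexing the defining double sums then gives the conjugate‑symmetry $\lhs{g}{f}=\lhs{f}{g}^{*}$ and the associativity $\lhs{\mathbf a\cdot f}{g}=\mathbf a\cdot\lhs{f}{g}$, both routine twisted‑convolution computations on $\mathscr{S}(\ZZt)$. Positivity I would obtain from the fundamental identity of Gabor analysis (Janssen): for $f,h\in\cSR$, $\hs{\lhs{f}{f}\cdot h}{h}=\theta^{-1}\sum_{\mu}|\hs{f}{\pi(\mu)h}|^{2}\ge 0$, the sum running over the adjoint lattice $\ZthZ$ of $\thZZ$; since $\cSR$ is dense in $\LtR$ this shows $\lhs{f}{f}\ge 0$ in $\cA_\theta$, and nondegeneracy is immediate because the coefficient of $\pi(0,0)$ in $\lhs{f}{f}$ equals $\|f\|_{2}^{2}$.

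Step (iii) is the heart of the matter, and here I would use the Gabor‑frame dictionary. Choose finitely many windows $g_{1},\dots,g_{n}\in\cSR$ whose time‑frequency shifts $\{\pithkl g_{j}:k,l\in\ZZ,\ 1\le j\le n\}$ form a frame for $\LtR$: for $\theta<1$ a single Gaussian suffices by \cite{ly92,se92-1}, and for arbitrary $\theta$ one may take $g_{j}=\pi(\tfrac{\theta(j-1)}{n},0)g$ for a smooth compactly supported bump $g$ and $n$ large, over‑sampling in time. Let $S$ be the corresponding (multi‑window) Gabor frame operator on $\LtR$; it is positive and invertible, commutes with every $\pithkl$, and by the spectral‑invariance results of Janssen and Gr\"ochenig--Leinert both $S$ and $S^{-1}$ restrict to continuous operators on $\cSR$, so the canonical dual windows $\gamma_{j}:=S^{-1}g_{j}$ lie in $\cSR$. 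The reconstruction formula then reads $f=\sum_{j=1}^{n}\lhs{f}{g_{j}}\cdot\gamma_{j}$ for all $f\in\cSR$. Reading this as a splitting: $\Phi:\cSR\to(\cA_\theta^\infty)^{n}$, $f\mapsto(\lhs{f}{g_{j}})_{j}$, is left $\cA_\theta^\infty$‑linear by associativity; $\Psi:(\cA_\theta^\infty)^{n}\to\cSR$, $(a_{j})_{j}\mapsto\sum_{j}a_{j}\cdot\gamma_{j}$, is left $\cA_\theta^\infty$‑linear by the module axioms; and $\Psi\circ\Phi=\mathrm{id}$. Hence $\cSR$ is isomorphic to the direct summand $\Phi(\cSR)$ of the free module $(\cA_\theta^\infty)^{n}$ cut out by the idempotent $\Phi\circ\Psi\in M_{n}(\cA_\theta^\infty)$, is generated by $\gamma_{1},\dots,\gamma_{n}$, and hence is a finitely generated projective left $\cA_\theta^\infty$‑module.

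The one genuinely analytic point, which I expect to require the most care, is the Schwartz regularity of the dual windows $\gamma_{j}$ --- equivalently, that the Gabor frame operator $S$, once invertible on $\LtR$, is already invertible within the relevant smooth algebra, i.e.\ that this algebra is closed under holomorphic functional calculus inside its $C^{*}$‑completion. Well‑definedness, the inner‑product axioms, and the passage from the reconstruction formula to a splitting idempotent are all either bookkeeping with twisted convolution or purely formal, and the mere existence of a smooth Gabor frame is classical; it is precisely this spectral‑invariance input --- Janssen's version for Feichtinger's algebra, sharpened by Gr\"ochenig and Leinert --- that upgrades $\LtR$ to the \emph{Schwartz} module claimed in the proposition.
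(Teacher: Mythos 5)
The paper states this proposition without proof, attributing it to Connes and citing \cite{co80,lu09,ri88}, so there is no in-text argument to compare against; judged on its own terms, your outline is correct and is essentially the route taken in the cited reference \cite{lu09}, which the paper itself invokes in Section 3 as the dictionary between module generators and (multi-window) Gabor frames. Your three steps are sound: the convergence and smoothness claims in (i) are standard estimates on samples of the short-time Fourier transform; the positivity argument in (ii) via the fundamental identity of Gabor analysis over the adjoint lattice $\ZthZ$ is exactly the mechanism behind Rieffel's pair of compatible inner products; and in (iii) the passage from a multi-window Schwartz frame with Schwartz duals to an idempotent in $M_n(\cA_\theta^\infty)$ is the correct formalization, with the construction of $n$ shifted bump windows correctly covering the case $\theta\ge 1$ (where, by Balian--Low itself, a single Schwartz generator cannot exist). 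You rightly isolate spectral invariance as the one nontrivial analytic input, though the attribution could be sharpened: the Janssen and Gr\"ochenig--Leinert results concern $S_0(\RR)$ and $\ell^1$, whereas for the Schwartz module the relevant fact is the older statement that $\cA_\theta^\infty$ is closed under holomorphic functional calculus in $\cA_\theta$ (Connes), equivalently Janssen's result that duals of Schwartz Gabor frames are Schwartz. For contrast, Connes' original proof in \cite{co80} and Rieffel's in \cite{ri88} do not use frames at all: they exhibit the bimodule structure of $\cSR$ over $\cA_\theta$ and the algebra of the adjoint lattice, and deduce projectivity from the existence of an identity in the range of the opposite-sided inner product (Morita equivalence); your frame-theoretic argument buys an explicit, analytically transparent set of generators, at the cost of invoking frame existence and spectral invariance as external inputs.
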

\vskip.3cm
\noindent
Connes defined also a constant curvature connection on $\cSR$,  see \cite{co80}, given by covariant derivatives 
$\nabla_1$, $\nabla_2$:
\beq\label{ccctorus}
  (\nabla_1 g)(t)=2\pi \mi \, \theta^{-1} \, t \, g(t) \qquad \textup{and} \qquad (\nabla_2 g)(t)=g^\prime(t) \, .
\eeq
The covariant derivatives satisfy the left Leibniz rule
\beq
\nabla_i (a\cdot g)=(\partial_i a)\cdot g+a\cdot(\nabla_i g), \qquad i =1,2 .
\eeq
For example, we have for $a=\pithkl$ that $\nabla_1(\pithkl g)=2\pi i k\pithkl g+\pithkl\nabla_1g$.
\vskip.3cm
\noindent
The covariant derivatives are compatible with the hermitian structure of the $\cA_\theta^\infty$-module $\cSR$:
\begin{equation*}
\partial_i (\lhs{f}{g})=\lhs{\nabla_i f}{g} + \lhs{f}{\nabla_i g}, \qquad i=1,2 .
\end{equation*}
Finally we observe that the connection has constant curvature:
\beq\label{cocurv}
F_{1,2} := [\nabla_1, \nabla_2] = - 2 \pi\mi \, \theta^{-1} \, \mathrm{I}_{\cSR} \, , 
\eeq
which acts on the left on the $\cA_\theta^\infty$-module $\cSR$, see \cite{co80,dalalu15}. 
In the case of the Moyal plane we have a similar setting and the constant curvature connection provides a gauge-theoretic 
description of the canonical commutation relations \cite{dalalu15}, which also sheds some additional light on Battle's proof 
in the next section.
\section{Balian-Low Theorem}

The theorem of Connes that $\cSR$ is a finitely generated projective module over $\cA_\theta$ implies that there exist generators $g_1,...,g_n$ in $\cSR$ 
such that 
\begin{equation*}
  f=\lhs{f}{g_1}g_1+\cdots+\lhs{f}{g_n}g_n~~\text{for all}~~f\in\cSR.
\end{equation*}
In this investigation we restrict our interest to the case, when the $\cA_\theta^\infty$-module $\cSR$ has one generator $g$. 
In \cite{lu09} it was shown that this is equivalent to the Gabor system $\mathcal{G}(g,\theta\ZZ\times\ZZ)$ to be a frame for $\LtR$, i.e. there exist 
constants $A$ and $B$, so called frame constants, such that 
\begin{equation*}
   A\|f\|_2^2\le\sum_{k,l}|\langle f,\pithkl g\rangle|^2\le B\|f\|_2^2~~\text{for all}~~f\in\LtR,
\end{equation*}
where $A$ is the maximal and $B$ the minimal constant with these properties. Equivalently, we have that the frame operator $Sf:=\lhs{f}{g}{g}$ has spectrum contained in $[A,B]$. 
\vskip.3cm
\noindent
If $\mathcal{G}(g,\theta\ZZ\times\ZZ)$ is a Gabor frame, then 
one has expansions of the form
\begin{equation*}
   f=\sum_{k,l}\langle f,\pithkl g\rangle\pithkl S^{-1}g=\sum_{k,l}\langle f,\pithkl S^{-1}g\rangle\pithkl g
\end{equation*}
for all $f\in\LtR$. The atom $S^{-1}g$ in the above expansions is known as the canonical dual Gabor atom. We showed in \cite{lu11} that there is 
a correspondence between tight Gabor frames, i.e. Gabor frames $\mathcal{G}(g,\theta\ZZ\times\ZZ)$ with $A=B$ and atoms $g$ in $\cSR$, and projections $p=\lhs{g}{g}$ in noncommutative tori $\cA_\theta^\infty$.  
\vskip.3cm
\noindent
 The Balian-Low theorem is a statement on the finer properties of the generators for the $\cA_\theta^\infty$-module $\cSR$ in the case $\theta=1$. Note that $\cA_1$ is the commutative $C^*$-algebra $C(\TT^2)$ of continuous functions over the torus $\TT^2$ and that the smooth subalgebra $\cA_1^\infty$ is the space of smooth functions over the torus $C^\infty(\TT^2)$. 
\vskip.3cm
\noindent
Our main contribution is to place Battle's proof of the theorem of the (weak) Balian-Low Theorem into the framework of noncommutative geometry. Namely, Battle 
uses the left Leibniz rule for the covariant derivations $\nabla_1$ and $\nabla_2$ for the time-frequency shifts $\pi(k,l)$ and that the 
connection defined by $\nabla_1,\nabla_2$ has constant curvature $-2\pi\mi$.
\vskip.3cm
\noindent
We present now Battle's argument for a more general formulation of the theorem of Balian-Low \cite{grhaheku02}.
\begin{theorem}[Weak-Balian-Low]
Suppose $\mathcal{G}(g,\theta\ZZ\times\ZZ)$ for a $g\in\LtR$ is a Riesz basis for $\LtR$ and 
we denote by $h$ the canonical dual Gabor atom $S^{-1}g$. Then either $\Big(\int_{\RR}|(\nabla _1 g)(t)|^2\,dt\Big)\Big(\int_{\RR}|(\nabla _2 g)(t)|^2\,dt\Big)=\infty$ 
or $\Big(\int_{\RR}|(\nabla _1 h)(t)|^2\,dt\Big)\Big(\int_{\RR}|(\nabla _2 h)(t)|^2\,dt\Big)=\infty$. 
\end{theorem}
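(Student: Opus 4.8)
The plan is to run Battle's argument while reading every step through the differential geometry of the $\cA_\theta^\infty$-module $\cSR$ recalled above. A Gabor system $\mathcal{G}(g,\theta\ZZ\times\ZZ)$ can be a Riesz basis only if the lattice has covolume one, so throughout we are in the commutative case $\theta=1$, where $\cA_1^\infty=C^\infty(\TT^2)$ and $(\nabla_1 g)(t)=2\pi\mi\,t\,g(t)$. Write $h=S^{-1}g$ for the canonical dual atom, so that $\{\pi(m,n)h:m,n\in\ZZ\}$ is the Riesz basis biorthogonal to $\{\pi(m,n)g\}$; equivalently $\lhs{g}{h}=1$ in $\cA_1^\infty$, i.e. $\langle g,\pi(m,n)h\rangle=\langle h,\pi(m,n)g\rangle=\delta_{m0}\delta_{n0}$. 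Assume, for a contradiction, that neither product in the statement is infinite; since $g\neq0$ and $h\neq0$, each of $\|\nabla_1 g\|_2$, $\|\nabla_2 g\|_2$, $\|\nabla_1 h\|_2$, $\|\nabla_2 h\|_2$ is then finite and positive, i.e. $tg,g',th,h'\in\LtR$. I will show that the single number $\langle\nabla_1 g,\nabla_2 h\rangle-\langle\nabla_2 g,\nabla_1 h\rangle$ is forced to equal both $-2\pi\mi$ and $0$.

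The value $-2\pi\mi$ is, in essence, the curvature. Formally, the skew-adjointness of $\nabla_1,\nabla_2$ on $\LtR$ together with \eqref{cocurv} give $\langle\nabla_1 g,\nabla_2 h\rangle-\langle\nabla_2 g,\nabla_1 h\rangle=\langle[\nabla_1,\nabla_2]g,h\rangle=\langle F_{1,2}g,h\rangle=-2\pi\mi\,\langle g,h\rangle$, and biorthogonality gives $\langle g,h\rangle=1$. Since $\nabla_1\nabla_2 g=2\pi\mi\,tg'$ need not lie in $\LtR$, this is only formal; rigorously, one rewrites the difference as $2\pi\mi\bigl(\langle tg,h'\rangle+\langle tg',h\rangle\bigr)$ and uses the product rule $(tg)'=g+tg'$ together with the integration by parts $\langle(tg)',h\rangle=-\langle tg,h'\rangle$, obtaining again $-2\pi\mi\,\langle g,h\rangle=-2\pi\mi$. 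The one analytic point is that the boundary term $t\,g(t)\overline{h(t)}$ tends to $0$ at $\pm\infty$: this holds because it and its derivative $g\overline{h}+tg'\overline{h}+tg\overline{h'}$ both lie in $L^1(\RR)$ under the four $\LtR$-hypotheses.

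The value $0$ follows from the left Leibniz rule and biorthogonality. By Parseval's identity for the biorthogonal pair, $\langle\nabla_1 g,\nabla_2 h\rangle=\sum_{m,n}\langle\nabla_1 g,\pi(m,n)h\rangle\,\overline{\langle\nabla_2 h,\pi(m,n)g\rangle}$ (an absolutely convergent series, since Riesz-basis and Bessel coefficients lie in $\ell^2$), and similarly for $\langle\nabla_2 g,\nabla_1 h\rangle$. The left Leibniz rule $\nabla_i(\pi(m,n)g)=(\partial_i\pi(m,n))g+\pi(m,n)\nabla_i g$ — whose first term is a multiple of $\pi(m,n)g$ and is annihilated by biorthogonality — together with integration by parts and $\pi(m,n)^{*}=\pi(-m,-n)$ (integer phases being trivial) gives, for $i=1,2$, the symmetric coefficient identity $\langle\nabla_i g,\pi(m,n)h\rangle=-\overline{\langle\nabla_i h,\pi(-m,-n)g\rangle}$ (and its $g\leftrightarrow h$ mirror). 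Substituting these into the two Parseval series and reindexing $(m,n)\mapsto(-m,-n)$ shows the series coincide, so $\langle\nabla_1 g,\nabla_2 h\rangle=\langle\nabla_2 g,\nabla_1 h\rangle$. Comparing the two evaluations forces $0=-2\pi\mi$, a contradiction; hence one of the two products in the statement is infinite, and the case $h=g$ (an orthonormal basis) recovers Theorem \ref{balianlow}.

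The main obstacle is precisely the tension between these two evaluations: the curvature value $-2\pi\mi$ is a priori a statement about $[\nabla_1,\nabla_2]$ that is legitimate only on vectors in the domain of $\nabla_1\nabla_2$ — e.g. the Schwartz generators of Connes' theorem — whereas the Riesz-basis hypothesis provides only $tg,g',th,h'\in\LtR$, which is just enough to justify the single integration by parts and the term-by-term handling of the Parseval series. The proof stands or falls on checking that the relevant boundary terms vanish and that the $\partial_i\pi(m,n)$ contributions in the Leibniz rule are killed by biorthogonality; the rest is routine bookkeeping with time-frequency shifts.
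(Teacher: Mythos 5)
Your proof is correct and takes essentially the same route as the paper: it is Battle's argument, using the left Leibniz rule and biorthogonality to force $\langle\nabla_1 g,\nabla_2 h\rangle=\langle\nabla_2 g,\nabla_1 h\rangle$, and contradicting this with the constant-curvature value $-2\pi\mi\,\langle g,h\rangle=-2\pi\mi$. You are in fact more careful than the paper about the analytic points (replacing the formal commutator $[\nabla_1,\nabla_2]$ by a single integration by parts with a vanishing boundary term, and justifying the convergence of the Parseval series), which the paper's terse conclusion that the computation forces $F_{1,2}=0$ leaves implicit.
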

\begin{proof}
  Suppose $\nabla_i g$ and $\nabla_i h$ are in $\LtR$ for $i=1,2$. Then the left Leibniz rule for $\nabla_1$ applied to $\pi(k,l)$ implies that
	\begin{equation*}
	  \nabla_1(\pi(k,l)g)=2\pi i k\pi(k,l)g+\pi(k,l)\nabla_1g.
	\end{equation*}
	Now, the biorthogonality of $g$ and $h$ yields that the second term on the right hand side vanishes 
	\begin{equation*}
	  \langle\nabla_1 g,\pi(k,l)h\rangle=\langle g,\nabla_1\pi(k,l)h\rangle=2\pi i k\langle g,\pi(k,l)h\rangle+\langle g,\pi(k,l)\nabla_1h\rangle
	\end{equation*}
	and consequently
	\begin{equation*}
	  \langle\nabla_1 g,\pi(k,l)h\rangle=\langle \pi(-k,-l)g,\nabla_1 h\rangle
	\end{equation*}
and in a similar manner one shows that $\langle\pi(k,l)g,\nabla_2 h\rangle=\langle \nabla_2 g,\pi(-k,-l)h\rangle$.
	By assumption $\mathcal{G}(g,\theta\ZZ\times\ZZ)$ is a Riesz basis for $\LtR$ and $h=S^{-1}g$ is the unique dual Gabor atom. Hence we have 
	\begin{equation*}
	  \nabla_1 g=\sum_{k,l}\langle\nabla_1g,\pi(k,l)h\rangle\pi(k,l)g\quad\textup{and}\quad\nabla_2 h=\sum_{k,l}\langle\nabla_2 h,\pi(k,l)g\rangle\pi(k,l)h.
	\end{equation*}
	We use these relations to derive at a contradiction to the fact that $\nabla_1,\nabla_2$ is a constant curvature connection on $\cSR$.
	\begin{align}
	  \langle \nabla_1 g,\nabla_2 h\rangle&=\Big\langle \sum_{k,l}\langle\nabla_1g,\pi(k,l)h\rangle\pi(k,l)g,\nabla_2h\Big\rangle\nonumber\\
		                                    &=\sum_{k,l}\langle\pi(-k,-l)g,\nabla_1 h\rangle\langle \nabla_2 g,\pi(-k,-l)h\rangle\nonumber\\
																				&=\sum_{m,n}\langle\pi(m,n)g,\nabla_1 h\rangle\langle \nabla_2 g,\pi(m,n)h\rangle\nonumber\\
																				&=\sum_{m,n}\langle \nabla_2 g,\pi(m,n)h\rangle\langle\pi(m,n)g,\nabla_1 h\rangle\nonumber\\
																				&=\Big\langle\nabla_2g,\sum_{m,n}\langle\nabla_1 h,\pi(m,n)h\rangle\pi(m,n)g \Big\rangle\nonumber\\
																				&=\langle \nabla_2 g,\nabla_1 h\rangle
	\end{align}
	yields that 
	\[\langle (\nabla_1\nabla_2-\nabla_2\nabla_1) g,h\rangle=-2\pi i\theta^{-1}\langle g,h\rangle.\]
Hence the curvature $F_{1,2}=\nabla_1\nabla_2-\nabla_2\nabla_1$ has a kernel as an operator on $L^2(\mathbb{R})$ that vanishes which is a contradiction to $F_{1,2}=-2\pi i\theta^{-1}I$. Hence we  have arrived at a contradiction to the assumption $\nabla_i g$ and $\nabla_i h$ are in $\LtR$ for $i=1,2$.
\end{proof}
In particular, the Balian-Low theorem implies that a $g\in\cSR$ cannot generate a Riesz basis of the form $\mathcal{G}(g,\ZZ^2)$. For $\theta\in(0,1)$ there exist Schwartz functions, e.g. the Gaussian $g(t)=e^{-\pi t^2}$, that generate Gabor frames for $\theta\ZZ\times\ZZ$. These Gabor frames may be used to construct non-trivial projections in $\cA_\theta$ with trace $\theta$, see \cite{lu11}. In contrast to the commutative case, where $\cA_{\theta=1}$ only has the trivial projections $p=0$ and $p=I$. Recall that $\cA_1^\infty$ is Morita equivalent to $C^\infty(\TT^2)$. Hence one may conclude that the generators of line bundles over noncommutative tori $\cA^\infty_\theta$ for $\theta\in(0,1)$ are more well-behaved, then the line bundles over $C^{\infty}(\TT^2)$. 
\begin{theorem}[Weak-Balian-Low-Revisited]
Let $g$ be a generator of the $\cA_1^\infty=C^\infty(\TT^2)$-module $\cSR$. Suppose the generator yields a Riesz basis for $\cSR$ and $h=S^{-1}g$ its canonical dual atom. 
Then either $\nabla_1g$ or $\nabla_2 h$ are not $\cSR$.
\end{theorem}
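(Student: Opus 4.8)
The plan is to recognise this statement as the Weak-Balian-Low theorem transported to the commutative fibre $\cA_1^\infty=C^\infty(\TT^2)$ via the Gabor-frame/projective-module dictionary of \cite{lu09}, and to rerun Battle's constant-curvature argument in that language. First I would record the translation: by \cite{lu09} a $g\in\cSR$ that generates the $\cA_1^\infty$-module $\cSR$ and yields a Riesz basis is precisely a $g$ for which $\mathcal{G}(g,\ZZ\times\ZZ)$ is a Gabor Riesz basis for $\LtR$; then $h=S^{-1}g$ is the unique dual atom, characterised by the biorthogonality $\langle g,\pi(k,l)h\rangle=\delta_{k0}\delta_{l0}$ and the two mutually dual reconstruction formulas $f=\sum_{k,l}\langle f,\pi(k,l)h\rangle\pi(k,l)g=\sum_{k,l}\langle f,\pi(k,l)g\rangle\pi(k,l)h$.

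Next I would argue by contradiction, assuming $\nabla_1 g\in\cSR$ and $\nabla_2 h\in\cSR$. Since $g$, being a generator of the module $\cSR$, already lies in $\cSR$, the vectors $\nabla_1 g=2\pi\mi\,tg$ and $\nabla_2 g=g'$ are automatically Schwartz, so only the hypothesis $\nabla_2 h\in\cSR$ carries content; from it, together with $h\in\LtR$, one obtains $h\in\cSR$ and hence $\nabla_1 h\in\cSR$. Thus all four vectors $\nabla_i g,\nabla_i h$ lie in $\LtR$, so every pairing below is a genuine inner product and the dual-frame expansions converge unconditionally. The time-frequency shifts $\pi(k,l)$ generate $\cA_1^\infty=C^\infty(\TT^2)$, and the left Leibniz rule $\nabla_i(a\cdot g)=(\partial_i a)\cdot g+a\cdot(\nabla_i g)$ at $a=\pi(k,l)$ gives $\nabla_1(\pi(k,l)g)=2\pi\mi k\,\pi(k,l)g+\pi(k,l)\nabla_1 g$. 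Pairing with $h$ and using biorthogonality to annihilate the multiple of $\pi(k,l)g$ yields the crossed identities $\langle\nabla_1 g,\pi(k,l)h\rangle=\langle\pi(-k,-l)g,\nabla_1 h\rangle$ and, symmetrically, $\langle\pi(k,l)g,\nabla_2 h\rangle=\langle\nabla_2 g,\pi(-k,-l)h\rangle$, exactly as in the preceding proof.

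Finally I would expand $\nabla_1 g$ in the frame inside $\langle\nabla_1 g,\nabla_2 h\rangle$, apply the two crossed identities term by term, reindex $(k,l)\mapsto(-k,-l)$, and resum by means of $\nabla_1 h=\sum_{k,l}\langle\nabla_1 h,\pi(k,l)g\rangle\pi(k,l)h$ to reach $\langle\nabla_1 g,\nabla_2 h\rangle=\langle\nabla_2 g,\nabla_1 h\rangle$. Since $\nabla_1,\nabla_2$ are skew-adjoint on $\cSR$, this forces $\langle F_{1,2}g,h\rangle=\langle[\nabla_1,\nabla_2]g,h\rangle=\langle\nabla_1 g,\nabla_2 h\rangle-\langle\nabla_2 g,\nabla_1 h\rangle=0$, whereas \eqref{cocurv} at $\theta=1$ reads $F_{1,2}=-2\pi\mi\,\mathrm{I}$, so $\langle F_{1,2}g,h\rangle=-2\pi\mi\,\langle g,h\rangle=-2\pi\mi\neq 0$. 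This contradiction establishes that $\nabla_1 g\notin\cSR$ or $\nabla_2 h\notin\cSR$.

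Since the constant-curvature identity \eqref{cocurv} and the Leibniz rule are already available, the real work is justification rather than discovery: one must check that, under the standing assumption, every bracket is a legitimate $\LtR$ pairing, that the dual-frame series may be interchanged with the inner product, and that the cancellation of $\pi(k,l)\nabla_i h$ by biorthogonality together with the reindexing loses nothing. This is precisely where the Riesz-basis rather than merely frame hypothesis is essential: it is the uniqueness of the dual atom $h$ and the resulting pair of biorthogonal expansions that make the cross term vanish, whereas for a genuinely over-complete Gabor frame that term survives and the curvature obstruction evaporates.
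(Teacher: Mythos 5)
Your proof is correct and follows essentially the same route as the paper: the paper obtains this statement as an immediate corollary of the Weak Balian--Low theorem it has just proved via Battle's argument (left Leibniz rule on $\pi(k,l)$, biorthogonality of $g$ and $h=S^{-1}g$, the dual Riesz-basis expansions, and the constant-curvature identity \eqref{cocurv}), which is exactly the argument you rerun in the module language. Your extra observations --- that $\nabla_2 h\in\cSR$ together with $h\in\LtR$ already forces $h\in\cSR$, and that the contradiction is properly phrased as $\langle F_{1,2}g,h\rangle=-2\pi\mi\,\langle g,h\rangle=-2\pi\mi\neq 0$ rather than as the operator identity $F_{1,2}=0$ --- are refinements of, not departures from, the paper's proof.
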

Finally, there is an extension of the Balian-Low theorem to the case of Gabor-Riesz bases generated by atoms in Feichtinger's algebra $S_0(\RR)$, see \cite{asfeka14,grma13}. Consequently, the preceding statement also holds for singly-generated projective $S_0(\RR)$ modules over the subalgebra $\cA^1_\theta= \{{\bf a}=\sum_{k,l\in\ZZ}a_{kl}\pithkl: a\in\ell^1(\ZZt)\}$ of $\cA_\theta$. Note that $\cA_1^1$ is the Wiener algebra $\cA(\TT^2)$ of absolutely convergent Fourier series. 
\begin{theorem}
Let $g$ be a generator of the $\cA_1^1=\cA(\TT^2)$-module $S_0(\RR)$. Then $g$ is not in $S_0(\RR)$.
\end{theorem}
There is an argument based on results from operator algebras. We know from \cite{lu09,lu11} that generators of $\cA_\theta^1$-modules are given by Gabor frames with generators in $S_0(\RR)$. 
\begin{proof} 
  Suppose $g\in S_0(\RR)$ generates a Gabor frame $\mathcal{G}(g,\ZZ^2)$. Then its canonical dual atom $h$ is also in $S_0(\RR)$ and 
	\[p=\sum_{k,l\in\ZZ^2}\langle g,\pi(k,l)h\rangle\pi(k,l)\]
	defines a projection in $\mathcal{A}^1_1$. Hence $S_0(\RR)$ is an $\cA^1_1$-module and this projection represents $S_0(\RR)$ in the $K_0(\cA^1_1)$, the $K_0$ group 
	of the algebra of absolutely convergent Fourier series. We know that $K_0(\cA^1_1)$ is isomorphic to $\ZZ^2$ and has two generators. One generator corresponds to the trivial
element $1$ and the other to the Bott element. The projective module defined by $S_0(\RR)$ above corresponds to the Bott element, as can be seen by a computation
of the Connes-Chern character \cite{ri88}. However, since the $C^*$-completion of $\cA^1_1$ is isomorphic to $C(\TT^2)$ we know that there cannot be any non-trivial projections 
in $C(\TT^2)$ since $\TT^2$ is connected. Thus we have a contradiction, as the projection $p$ above would be non-trivial.
\end{proof}
This argument extends to higher dimensional symplectic lattices \cite{grhaheku02}, as well in much the
same way, since time-frequency shifts from symplectic lattices generate commutative algebras, which leads naturally to topological
obstructions and the results in \cite{ro08-3} might be useful in this context.
\\~\\
Note how the argument fails when we have a multi-window Gabor frame for $\ZZ^2$ generated by $g_1,...,g_n$ in $S_0(\RR)$. In that case, we can define a projection $P=(p_{ij})_{i,j=1}^n$ in the matrix algebra $M_n(C(\TT^2))$ over $C(\TT^2)$:
\[p_{ij}=\sum_{k,l\in\ZZ}\langle g_i,\pi(k,l)h_j\rangle\pi(k,l),\]
where $h_j$ denotes the canonical dual Gabor atom to the jth Gabor atom $g_j$. An elementary computation shows that $P$ is a projection in $M_n(C(\TT^2))$, which does not yield a contradiction, because there are non-trivial projections in $M_n(C(\TT^2))$ for $n\ge 2$. 

\end{document}